\begin{document}

\theoremstyle{plain}
\newtheorem{thm}{Theorem}[section]
\newtheorem{theorem}[thm]{Theorem}
\newtheorem{main theorem}[thm]{Main Theorem}
\newtheorem{lemma}[thm]{Lemma}
\newtheorem{corollary}[thm]{Corollary}
\newtheorem{proposition}[thm]{Proposition}
\newtheorem{claim}[thm]{Claim}

\theoremstyle{definition}
\newtheorem{notation}[thm]{Notation}

\newtheorem{remark}[thm]{Remark}
\newtheorem{remarks}[thm]{Remarks}
\newtheorem{conjecture}[thm]{Conjecture}
\newtheorem{definition}[thm]{Definition}
\newtheorem{example}[thm]{Example}

\newcommand{\Max}{{\rm Max \ }}
\newcommand{\sA}{{\mathcal A}}
\newcommand{\sB}{{\mathcal B}}
\newcommand{\sC}{{\mathcal C}}
\newcommand{\sD}{{\mathcal D}}
\newcommand{\sE}{{\mathcal E}}
\newcommand{\sF}{{\mathcal F}}
\newcommand{\sG}{{\mathcal G}}
\newcommand{\sH}{{\mathcal H}}
\newcommand{\sI}{{\mathcal I}}
\newcommand{\sJ}{{\mathcal J}}
\newcommand{\sK}{{\mathcal K}}
\newcommand{\sL}{{\mathcal L}}
\newcommand{\sM}{{\mathcal M}}
\newcommand{\sN}{{\mathcal N}}
\newcommand{\sO}{{\mathcal O}}
\newcommand{\sP}{{\mathcal P}}
\newcommand{\sQ}{{\mathcal Q}}
\newcommand{\sR}{{\mathcal R}}
\newcommand{\sS}{{\mathcal S}}
\newcommand{\sT}{{\mathcal T}}
\newcommand{\sU}{{\mathcal U}}
\newcommand{\sV}{{\mathcal V}}
\newcommand{\sW}{{\mathcal W}}
\newcommand{\sX}{{\mathcal X}}
\newcommand{\sY}{{\mathcal Y}}
\newcommand{\sZ}{{\mathcal Z}}
\newcommand{\A}{{\mathbb A}}
\newcommand{\B}{{\mathbb B}}
\newcommand{\C}{{\mathbb C}}
\newcommand{\D}{{\mathbb D}}
\newcommand{\E}{{\mathbb E}}
\newcommand{\F}{{\mathbb F}}
\newcommand{\G}{{\mathbb G}}
\newcommand{\HH}{{\mathbb H}}
\newcommand{\I}{{\mathbb I}}
\newcommand{\J}{{\mathbb J}}
\newcommand{\M}{{\mathbb M}}
\newcommand{\N}{{\mathbb N}}
\renewcommand{\P}{{\mathbb P}}
\newcommand{\Q}{{\mathbb Q}}
\newcommand{\re}{{\mathbb R}}
\newcommand{\R}{{\mathbb R}}
\newcommand{\T}{{\mathbb T}}
\newcommand{\U}{{\mathbb U}}
\newcommand{\V}{{\mathbb V}}
\newcommand{\W}{{\mathbb W}}
\newcommand{\X}{{\mathbb X}}
\newcommand{\Y}{{\mathbb Y}}
\newcommand{\Z}{{\mathbb Z}}
\newcommand{\la}{{\lambda}}
\newcommand{\al}{{\alpha}}
\newcommand{\be}{{\beta}}

\newcommand{\ve}{{\varepsilon}}
\newcommand{\vr}{{\varphi}}
\newcommand{\ab}{|}
\def\ga{{\gamma}}
\def\vr{{\varphi}}
\def\la{{\lambda}}

\def\al{{\alpha}}
\def\be{{\beta}}
\def\ve{{\varepsilon}}
\def\lv{\left\vert}
\def\rv{\right\vert}

\def\sen{\operatorname{{sen}}}
\def\tr{\operatorname{{tr}}}

\def\re{{\Bbb{R}}}
\def\bc{{\mathbb C }}
\def\fT{{\frak T}}
\def\nb{{\mathbb N}}
\def\bz{{\mathbb Z}}

\def\pc{{\mathcal P}}

\centerline{\huge\bf An algorithm for the word entropy}

\vskip .3in 

\centerline{\sc S\'ebastien Ferenczi}

\centerline{\sl IMPA - CNRS UMI 2924}

\centerline{\sl Estrada Dona Castorina 110, 22460-320 Rio de Janeiro, RJ, Brasil}

\centerline{\sl and}

\centerline{\sl Institut de Math\'ematiques de Marseille, UMR 7373 CNRS,}

\centerline{\sl 163, avenue de Luminy, 13288 Marseille Cedex 9, France}

\vskip .2in

\centerline{\sc Christian Mauduit}

\centerline{\sl Universit\'e d'Aix-Marseille and Institut Universitaire de France,}

\centerline{\sl Institut de Math\'ematiques de Marseille, UMR 7373 CNRS,}

\centerline{\sl 163, avenue de Luminy, 13288 Marseille Cedex 9, France}

\vskip .2in

\centerline{\sc Carlos Gustavo Moreira}

\centerline{\sl Instituto de Matem\'atica Pura e Aplicada,}

\centerline{\sl Estrada Dona Castorina 110,}

\centerline{\sl 22460-320 Rio de Janeiro, RJ, Brasil}

\vskip .3in

{\bf Abstract:}
For any infinite word $w$ on a finite alphabet $A$, the complexity function $p_w$ of $w$ is the sequence counting, for each non-negative $n$, the number $p_w(n)$ of words of length $n$ on the alphabet $A$ that are factors of the infinite word $w$ and the the entropy of $w$ is the quantity $E(w)=\lim\limits_{n\to\infty}\frac 1n\log p_w(n)$.
For any given function $f$ with exponential growth, Mauduit and Moreira introduced in [MM17]  the notion of word entropy $E_W(f) = \sup \{E(w), w \in \A^{\N}, p_w \le f \}$
and showed its links with
fractal dimensions of sets of infinite sequences with complexity function bounded by $f$.
The goal of this work is to give an algorithm to estimate with arbitrary precision $E_W(f)$ from finitely many values of $f$.

\vskip .1in

2010 Mathematics Subject Classification:  68R15, 37B10, 37B4, 28D20.

Keywords: combinatorics on words, symbolic dynamics, entropy.

This work was supported by CNPq, FAPERJ and the Agence Nationale de la Recherche project ANR-14-CE34-0009 MUDERA.
\vskip .3in

\section{Introduction}

This work concerns the little-explored field of word combinatorics in positive entropy, which means the study of
infinite words on a finite alphabet with a
complexity function (see Definition \ref {def1.2}) of exponential growth.
There are not many results on this topic, besides the well-known one of Grillenberger \cite{Gri} who built symbolic systems of any given entropy. 

Mauduit and Moreira introduced in [MM17] new notions in this context
with the arithmetic motivation to study sets of numbers from the interval $[0, 1]$ whose expansion (in a given base $q$) has a complexity function bounded by a given function $f$.
The determination of the Hausdorff dimension of these sets gave rise to a new quantity $E_W(f)$, called {\em word entropy} of $f$, which turns to be equal to the topological entropy of the
shift on the set of corresponding expansions.  

The computation of $E_W(f)$ is trivial when $E_0(f)$, the {\em exponential growth rate} of $f$ (defined in (\ref {E_0})), is equal to zero or if $f$ is itself a complexity function.
Otherwise, results can be surprising, even when $f$ is very regular: for example, in \cite{MM17} it is shown that for the function $f$ defined
for any non-negative integer $n$ by $f(n)=\lceil \frac32\rceil^n$, we have $E_W(f) = \log (\frac {1 + \sqrt 5}2)$. 
Another striking result (see Theorem 2.9 from  [MM18]) says that if $f$ verifies the quite natural conditions $(\mathcal C^*)$ (see Definition \ref{defC^*}), then the ratio
$E_W(f) / E_0(f)$  lies always in the interval $]\frac 12, 1 ]$ and moreover we have  $$\inf \{ \frac {E_W(f)} {E_0(f}), f \mbox{  $satisfies$  } (\mathcal C^*) \} = \frac 12.$$
Indeed, in the overwhelming majority of  cases, we do not have access to an exact value of the word entropy.
Thus in this work we propose an algorithm to get an approximate value of the word entropy, using in depth the combinatorial properties of the symbolic system.

\section{Definitions and notations}

We denote
by $q$ a fixed integer greater or equal to $2$, by $A$ the finite
alphabet $A=\{0,1,\dots,q-1\}$, by $A^*=\bigcup\limits_{k\ge0} A^k$
the set of finite words on the alphabet $A$ and by $A^{\N}$ the set
of infinite words (or infinite sequences of letters) on the alphabet
$A$.
More generally, if $\Sigma \subset A^*$, we denote by  $\Sigma ^ \N$ the set of infinite words obtained by concatenating elements of $\Sigma$.
If $w\in A^{\N}$ we denote by $L(w)$ the set of finite factors of $w$:
$$
L(w)=\{v\in A^*,\,\, \exists \, (v',v'')\in A^*\times A^{\N}, \, w=v'v v''\}
$$
and, for any non-negative integer $n$, we write $L_n(w)=L(w)\cap A^n$.
For any $Y \subset A^\N$ and $n \in \N$ we denote $L_n(Y) = \bigcup\limits_{w\in Y} L_n(w)$.
If $w\in A^n, n \in \N$ we denote $|w| = n$ the length of the word $w$ and if $S$ is a finite set, we denote by $|S|$ the number of elements of $S$.
For any $(a, b) \in \R^2$ with $a \le b$, we denote by $\llbracket a, b \rrbracket$ the set $[a, b] \cap \Z$ and for any
$x$ real number, we denote
$
\lfloor x\rfloor = \max\{n\in\Z, n\le x\},
\lceil x\rceil=\min\{n\in \Z, x\le n\}
$
and
$\{ x \} = x - \lfloor x\rfloor$.



Let us recall the following classical lemma concerning sub-additive sequences due to Fekete [Fek23]:

\begin{lemma}\label{lemFekete}
If $(a_n)_{n\ge1}$ is a sequence of real numbers such that $a_{n+n'} \le a_n + a_{n' }$ for any positive integers $n$ and $n'$, then the sequence
$\left( \frac {a_n} n\right)_{n\ge1}$ converges to $\inf_{n\ge 1}\frac {a_n} n$.
\end{lemma}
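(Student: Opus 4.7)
The plan is to prove both convergence and identification of the limit simultaneously by showing that $\limsup_{n\to\infty} a_n/n \le \inf_{m\ge 1} a_m/m$, since the reverse inequality $\liminf_{n\to\infty} a_n/n \ge \inf_{m\ge 1} a_m/m$ is immediate from the definition of the infimum.

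I would first set $L = \inf_{m\ge 1} a_m/m \in [-\infty, +\infty)$ and fix an arbitrary real number $M > L$. By definition of the infimum, there exists an integer $m \ge 1$ with $a_m/m < M$. The key step is then Euclidean division: for any integer $n > m$, write $n = km + r$ with $k \ge 1$ and $0 \le r < m$. Iterating the subadditivity hypothesis $k$ times gives $a_{km} \le k\, a_m$, and then one more application (when $r \ge 1$) yields
\[
a_n = a_{km+r} \le k\, a_m + a_r,
\]
while for $r=0$ we simply have $a_n \le k\, a_m$. Dividing by $n$ gives
\[
\frac{a_n}{n} \le \frac{km}{n}\cdot\frac{a_m}{m} + \frac{a_r}{n}.
\]

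Now I would let $n \to \infty$ with $m$ fixed: the ratio $km/n$ tends to $1$, and since $r$ stays in the finite range $\{0,1,\dots,m-1\}$, the quantity $a_r$ is bounded by $\max_{0 \le j < m} a_j$ (with the convention $a_0 = 0$ when needed), so $a_r/n \to 0$. Taking the limit superior gives $\limsup_{n\to\infty} a_n/n \le a_m/m < M$. Since $M > L$ was arbitrary, this forces $\limsup_{n\to\infty} a_n/n \le L$, which combined with $\liminf_{n\to\infty} a_n/n \ge L$ proves that $a_n/n$ converges to $L$.

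The argument is essentially mechanical, and the only subtlety I anticipate is the treatment of the remainder term: one must ensure that the finitely many values $a_1, \dots, a_{m-1}$ are handled uniformly (they are bounded simply because they are finitely many real numbers), and one must also accommodate the case $L = -\infty$, which the proof above handles without change by replacing "$M > L$" with "$M$ arbitrarily negative". No additional hypothesis such as boundedness from below is needed, since convergence to $-\infty$ is allowed by the statement as long as it is interpreted as an infimum in $[-\infty, +\infty)$.
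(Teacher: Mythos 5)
Your proof is correct and is the standard argument for Fekete's subadditivity lemma: Euclidean division $n=km+r$, iterated subadditivity to get $a_n\le k\,a_m+a_r$, and passage to the limit superior, with the finitely many remainder terms handled uniformly. The paper itself states this lemma without proof (citing Fekete), so there is nothing to compare against; your treatment, including the degenerate case $\inf_{m\ge1} a_m/m=-\infty$, is exactly the classical one and needs no changes.
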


\begin{definition}\label{def1.2}
 The { \it complexity function} of $w\in A^{\N}$ is defined for any non-negative integer
  $n$ by $p_w(n)=|L_n(w)|$. 

\end{definition}

For any $w\in A^{\N}$ and for any $(n,n')\in \N^2$ we have
  $L_{n+n'}(w)\subset L_n(w) L_{n'}(w)$ so that 
  $p_w(n+n')\le p_w(n) p_w(n')$
and it follows from Lemma \ref{lemFekete}
that for any
$w\in A^{\N}$, the sequence $\left( \frac 1n \log p_w(n)\right)_{n\ge1}$ converges
 to $\inf_{n\ge 1} \frac 1n \log p_w(n)$.
We denote
$$E(w)=\lim\limits_{n\to\infty}\frac 1n\log p_w(n) = h_{top} (X(w), T)$$
the topological entropy of the symbolic dynamical system $(X(w),T)$ where
$T$ is the one-sided shift on $A^{\N}$ and $X=\overline{orb_T(w)}$ is
the closure of the orbit of $w$ under the action of $T$ in $A^{\N}$
($A^{\N}$ is equipped with the product topology of the discrete
topology on $A$, i.e. the topology induced by the distance
$d(w,w')= \exp(-\text{min} \{n \in \N | \, w_n \ne w_n' \})$).

The complexity function gives information about the statistical
properties of an infinite sequence of letters. In this sense, it
constitutes one possible way to measure the random behaviour of an
infinite sequence:
see [Que87, Fer99, PF02].







\section{Exponential rate of growth and word entropy of a function}







For any given function $f$ from $\N$ to $\R^{+}$, we denote
$$W(f)=\{w\in A^{\N}, p_w(n)\le f(n), \forall n \in \N\},$$
$$\sL_n(f)=\bigcup\limits_{w\in W(f)} L_n(w)$$ 
and $E_0(f)$ the limiting lower exponential growth rate of $f$
\begin{equation} \label{E_0}
E_0(f)=\lim\limits_{n\to\infty} \inf \frac 1n \log f(n).
\end{equation}
For any $(n,n')\in \N^2$ we have
$\sL_{n+n'}(f)\subset \sL_n(f) \sL_{n'}(f)$ so that 
the sequence
$\left( \frac 1n \log |\sL_n(f)| \right)_{n\ge1}$ converges
 to $\inf_{n\ge 1} \frac 1n \log |\sL_n(f)|$,
 which is the topological entropy of the subshift $(W(f), T)$ :
 $$h_{top} (W(f), T) = \lim_{n\to+\infty}\frac 1n\log |\sL_n(f)| = \inf_{n\ge 1} \frac 1n \log |\sL_n(f)|.$$
The notion of w-entropy (or word-entropy) of $f$ is defined in [MM17] as follow :

\begin{definition}\label{defg}
If $f$ is a function from $\N$ to $\R^{+}$, the
{\it w-entropy} (or  {\it word entropy}) of $f$ is the quantity
$$E_W (f) =\sup_{\substack{w \in W(f)}} E(w).$$
\end{definition}

The papers [MM10] and [MM12] concern the case $E_0(f)=0$ and [MM17] the case of positive entropy. In particular 
the word entropy of $f$ is equal to the topological entropy of the subshift
$(W(f), T)$ (see Theorem 2.3 from [MM17]):
for any function $f$ from $\N$ to $\R^{+}$, we have
$$E_W(f) = \lim_{n\to+\infty}\frac 1n\log (|\sL_n(f)|) = h_{top} (W(f), T) .$$
(see also beginning of Section 4 from [MM17] and Chapter 8 from [Wal82] to understand this result as a consequence of the variational principle).

The word entropy of $f$ allows to compute exactly the fractal dimensions of the set
of real numbers from the interval $[0,1]$ the $q-$adic expansion of which has a
complexity function bounded by $f$.
(see Theorem 5.1 from [MM17]).
Note that several authors have applied the notion of dimension introduced by Hausdorff in [Hau19]
to number theoretical problems (see [Bug04, Chapters V and VI] for a very good survey on these questions and
[Fal90, Chapters 2 and 3] for basic definitions concerning fractal dimensions).




\begin{definition}\label{defC^*}

We say that a function $f$ from $\N$ to $\R^{+}$ satisfies the conditions $(\mathcal C^*)$ if

i) for any $n \in \N$ we have $f(n+1) > f(n) \ge n+1$ ;

ii) for any $(n, n' ) \in \N^2$ we have $f(n+n') \le f(n) f(n') $.
\end{definition}

For any function $f$ from $\N$ to $\R^{+}$ we have $E_W(f) \le E_0(f)$ and it is easy to give examples of function $f$ for which the entropy ratio 
$E_W(f)/E_0(f)$ can be made arbitrarily small (see beginning of Section 7 from [MM17]).
When $f$ satisfies the quite natural conditions $(\mathcal C^*)$, it still might happen that $E_W(f) < E_0(f)$ (see Sections 7.2 and 7.4 from [MM17]), but Mauduit and Moreira proved
the following theorem  (see Theorem 4.2 and Remark 4.3 from
[MM18]):
\begin{theorem} \label{1/2theorem}
If $f$ is a function from $\N$ to $\R^{+}$ satisfying the conditions $(\mathcal C^*)$, then $E_W(f)> \frac 12 E_0(f)$.
\end{theorem}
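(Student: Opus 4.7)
Let $\alpha:=E_0(f)$, and assume $\alpha>0$ (the stated inequality must be understood modulo this degenerate case, since $f(n)=n+1$ satisfies $(\mathcal{C}^*)$ with $\alpha=E_W(f)=0$). Fekete's lemma applied to the submultiplicative sequence $a_n=\log f(n)$ (hypothesis $(\mathcal{C}^*)$(ii)) gives $\alpha=\inf_n\tfrac{1}{n}\log f(n)$, so $f(n)\ge e^{n\alpha}$ for every $n\ge 1$. The plan is to construct, for each large $N$, a subshift $X_N\subseteq W(f)$ with $h_{top}(X_N)>\tfrac{1}{2}\alpha+\eta$ for some $\eta>0$ independent of $N$; then $E_W(f)\ge h_{top}(X_N)>\tfrac{1}{2}\alpha$. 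The naive attempt is a free-concatenation subshift $X_N=\Sigma_N^{\N}$ with $\Sigma_N\subset A^N$. A standard window-counting argument gives $p_w(m)\le N|\Sigma_N|^{\lceil m/N\rceil+1}$ for every $w\in X_N$; the binding scale is $m=N$, where this reduces to $p_w(N)\le N|\Sigma_N|^2$, and requiring $p_w(N)\le f(N)$ forces $|\Sigma_N|\le (f(N)/N)^{1/2}$ and hence $\tfrac{1}{N}\log|\Sigma_N|\le\tfrac{1}{2}\alpha+o(1)$. The factor $\tfrac{1}{2}$ comes from boundary-straddling windows which can combine any suffix of $\sigma_i$ with any prefix of $\sigma_{i+1}$, producing $|\Sigma_N|^2$ possibilities per offset.

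\textbf{Breaking the barrier.} To strictly exceed $\tfrac{1}{2}\alpha$, I would impose structure on $\Sigma_N$ that tames boundary crossings. Template: pick a seed $w^*\in W(f)$ with $p_{w^*}(N)$ close to $|\sL_N(f)|$, a length $k<N$, and by a pigeonhole over length-$k$ suffixes of length-$N$ factors of $w^*$, find $u\in A^k$ shared by at least $p_{w^*}(N)/p_{w^*}(k)$ such factors; let $\Sigma_N$ be this set. Because every block ends in the fixed $u$, boundary-crossing length-$N$ windows starting at offsets $j\ge N-k$ begin inside $u$ and depend only on the next block, so the count per such offset drops from $|\Sigma_N|^2$ to at most $|\Sigma_N|$. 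One then verifies $p_w(N)\le f(N)$ together with the analogous short-scale bounds (which inherit from $w^*$ via the common-suffix constraint) while choosing $k$ to optimize $\tfrac{1}{N}\log(p_{w^*}(N)/p_{w^*}(k))$.

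\textbf{Main obstacle.} The hard part is producing a \emph{uniform} surplus $\eta>0$ independent of $N$. The linear lower bound $f(n)\ge n+1$ from $(\mathcal{C}^*)$(i) provides only $O(\log N)$ of slack per scale, which vanishes after division by $N$. I expect the uniform gap to emerge from an iterative self-improvement: once the above construction yields some $X_N$ with $h_{top}(X_N)>\tfrac{1}{2}\alpha$, one re-uses $X_N$ itself as the seed and iterates, and the resulting sequence of lower bounds must accumulate a strictly positive total gap—otherwise $f$ would be forced into an asymptotically pure exponential form $c^n$ in tension with the strict monotonicity $f(n+1)>f(n)$ combined with $f(n)\ge n+1$. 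Carrying out this iteration while maintaining $p_w(m)\le f(m)$ at all short scales is where I expect the bulk of the technical work to lie.
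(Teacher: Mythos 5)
You should note first that this paper does not actually prove Theorem \ref{1/2theorem}: it is quoted from [MM18] (Theorem 4.2 and Remark 4.3 there) and used as an ingredient in Section 4, so there is no in-paper proof to compare with. Judged on its own terms, your sketch correctly identifies the heuristic source of the $\tfrac12$ barrier (boundary-straddling windows in a free concatenation) and the right general remedy (rigidifying block boundaries), which is indeed the spirit of the constructions in [MM17, MM18] and of Section 4 here; but the two places where you defer the work are exactly where the theorem lives, and the mechanisms you propose there fail. The decisive gap is the strict inequality. Your only route to a uniform surplus is the claim that, if the iteration produced no gap, $f$ would be forced into an asymptotically pure exponential form $c^n$ ``in tension with'' $(\mathcal C^*)$. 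There is no such tension: $f(n)=2^n$ (more generally $\lceil c^n\rceil$ for $c\ge 2$) satisfies both conditions of $(\mathcal C^*)$ exactly, and for such $f$ the entropy ratio is $1$, a completely unproblematic case; so the intended contradiction is unavailable and the iteration has no reason to accumulate a positive gap. Moreover, since $\inf\{E_W(f)/E_0(f)\}$ over $(\mathcal C^*)$ equals $\tfrac12$ (Theorem 5.1 of [MM18], quoted right after the statement), the surplus necessarily depends quantitatively on the given $f$ and cannot come from a soft structural dichotomy of this kind.

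Even the non-strict bound is not reached by what you wrote. First, the seeding is circular: choosing $w^*\in W(f)$ with $p_{w^*}(N)$ close to $|\sL_N(f)|$ ties $|\Sigma_N|$ to $|\sL_N(f)|\approx e^{NE_W}$ (Theorem 2.3 of [MM17]), so the entropy lower bound you extract is expressed in terms of $E_W$ itself; it cannot yield $E_W>\tfrac12 E_0$ unless the argument is restructured (say, by contradiction from an assumed upper bound on $|\sL_N(f)|$), and no such restructuring is indicated — the blocks must ultimately be manufactured from $f$, as in [MM18], not extracted from $W(f)$. Second, a common suffix $u$ of length $k$ only controls the offsets $j\ge N-k$; the remaining $N-k$ offsets still contribute up to $|\Sigma_N|^2$ crossing words each, so the barrier at scale $N$ is untouched unless $k$ is comparable to $N$, and then the pigeonhole loss $p_{w^*}(k)$ cancels the gain you are optimizing. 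Third, nothing is verified at small and intermediate scales, where $f(n)$ may be as small as roughly $e^{nE_0}$ and the internal factors of the blocks can already nearly exhaust it; crossing factors (suffixes of $u$ concatenated with arbitrary block prefixes) can then push $p_w(n)$ above $f(n)$. This is precisely why the analogous construction in Section 4 imposes a common prefix \emph{and} a common suffix ($\gamma_0,\gamma_1,\gamma_2$) and repeatedly prunes the block set to create room at every scale (the ghost-factor deletions of Lemma \ref{cour} and the passages to $Z_3$, $Z_4$, $Z_5$); the assertion that the short-scale bounds ``inherit from $w^*$ via the common-suffix constraint'' is unjustified without an argument of that type.
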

\noindent
Moreover, the constant $\frac 12$ in Theorem \ref {1/2theorem} is optimal (see Theorem 5.1 from [MM18]).

As mentioned in [MM17] it is in general more difficult to compute $E_W(f)$ than $E_0(f)$.
The goal of this work is to give
an algorithm which allows us to estimate with arbitrary
precision $E_W(f)$ from finitely many values of $f$, if we know
already $E_0(f)$ and have some information on the
speed with which this limit is approximated.

\vskip .3in

\section{The algorithm}

 We assume that the function $f$ from $\N$ to $\R^{+}$
satisfies the conditions $(\cal C^*)$. We don't loose generality with 
this assumption, since 
if there exists an integer $n$ such that $f(n) < n+1$ we have $E_W(f) = 0$ and if not, it follows from Remark 7.3 from [MM17] that
we may always change the function $f$ by a function
 $\tilde { \tilde f}$ satisfying
conditions $(\cal C^*)$ and such that $E_W(\tilde { \tilde f}) = E_W(f)$.

\begin{theorem}\label{durdur}
 There is 
an algorithm which gives, starting from the function $f$ and $\ve \in ]0, 1[$, a quantity $h$ such that
$(1-\ve)h\le E_W(f) \le h$.
The quantity $h$ depends explicitely on $\ve$, $E_0(f)$, $N$, $f(1)$, ..., $f(N)$, for an integer
$N$ which depends explicitely on $\ve$, $E_0(f)$ and an integer $n_0$. larger than an explicit function 
of $\ve$ and $E_0(f)$ and
such that
$$\frac{\log f(n)}{n} < (1+\frac{E_0(f) \ve}{210(4+2E_0(f))})E_0(f) \quad\mbox{for  any}\quad
 n \in \llbracket n_0, 2n_0 - 1 \rrbracket.$$
\end{theorem}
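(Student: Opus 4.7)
I will take $h$ to be the $N$-th Fekete approximation
$$h = \tfrac{1}{N}\log|\sL_N(f)|$$
to the topological entropy of the subshift $W(f)$, for $N$ a large multiple of the explicitly chosen $n_0$. The identity $E_W(f) = h_{top}(W(f),T) = \inf_n \tfrac{1}{n}\log|\sL_n(f)|$ recalled from [MM17] gives the upper bound $E_W(f)\le h$ at once, using the submultiplicativity $\sL_{n+n'}(f)\subset \sL_n(f)\sL_{n'}(f)$ and Fekete's Lemma \ref{lemFekete}. The set $\sL_N(f)$ is computable (or at least effectively upper-bounded) from $f(1),\dots,f(M)$ for some $M=M(N,E_0(f))$, via the decreasing chain of ``extendability classes'' $\sL_N^{(k)}(f)=\{v\in A^N: v \text{ extends in } A^k \text{ respecting } p_u\le f \text{ throughout}\}$, which stabilises for $k$ bounded in terms of $N$. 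The existence of $n_0$ satisfying the stated tightness condition on $\tfrac{\log f(n)}{n}$ uses that, by $(\mathcal C^*)$ and Fekete, $\tfrac{\log f(n)}{n}\to E_0(f)$.

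\textbf{Lower bound.} The entire difficulty is in producing a subshift $Y\subset W(f)$ with $h_{top}(Y)\ge(1-\ve)h$. The naive attempt $Y=\overline{\Sigma^\N}$ with $\Sigma\subset\sL_{n_0}(f)$ is insufficient: the complexity of $Y$ at length $2n_0$ is $\ge n_0|\Sigma|^2$, forcing $|\Sigma|^2\le f(2n_0)/n_0$, hence entropy at most $\tfrac12 E_0(f)$. This is precisely the $\tfrac12$-barrier of Theorem \ref{1/2theorem}, so a more subtle, sofic-type construction is needed. My plan is to extract from $\sL_N(f)$ a subset $\Sigma$ of blocks together with a transition set $T\subset\Sigma\times\Sigma$ of pairs $(u,v)$ with $uv\in\sL_{2N}(f)$, and to take $Y$ to be the edge shift of the associated directed graph (its words are $u_1u_2\cdots$ with each $(u_i,u_{i+1})\in T$). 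The topological entropy of $Y$ equals $\tfrac{1}{N}\log\rho(M_T)$ for the $0/1$-transition matrix $M_T$; by the trivial spectral bound $\rho(M_T)\ge |T|/|\Sigma|$, one reduces matters to arranging
$$|T|\ \gtrsim\ |\sL_{2N}(f)|\cdot e^{-O(\ve)N},\qquad |\Sigma|\ \lesssim\ |\sL_N(f)|\cdot e^{O(\ve)N},$$
and then using $|\sL_{2N}(f)|\ge |\sL_N(f)|^{2(1-O(\ve))}$, which follows from the hypothesis that $\tfrac{\log f(n)}{n}$ is within $(1+\eta)E_0(f)$ on $\llbracket n_0,2n_0-1\rrbracket$ together with submultiplicativity of $|\sL_n(f)|$.

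\textbf{Ensuring $Y\subset W(f)$.} For the construction to give a lower bound for $E_W(f)$, one must verify $p_Y(k)\le f(k)$ for \emph{every} $k\in\N$, not just multiples of $N$. Any factor of $Y$ of length $k$ is a factor of some concatenation $u_1\cdots u_m$ with $(u_i,u_{i+1})\in T$, and each block-pair $u_iu_{i+1}$ is in $\sL_{2N}(f)$. The submultiplicativity $f(k+k')\le f(k)f(k')$ then propagates the complexity control: once $p_Y$ is bounded by $f$ on $\llbracket 1,2N\rrbracket$, the long-range bound follows from $\rho(M_T)\le e^{Nh}$ together with the tight estimate on $\tfrac{\log f(n)}{n}$. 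The short-range control is the delicate part, and this is precisely where the explicit constant $\tfrac{1}{210(4+2E_0(f))}$ enters: it quantifies the slack needed so that all of the short-scale ($k<2N$) and long-scale ($k\gg N$) inequalities hold uniformly and compose to at most an $\ve$-loss.

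\textbf{Main obstacle.} The central difficulty is the simultaneous extraction of $\Sigma$ and $T$ of near-maximal sizes such that the induced sofic subshift $Y$ is actually contained in $W(f)$. Making $\Sigma$ too large over-counts pairs and violates the bound $f(2N)$; making $T$ too sparse collapses the spectral radius and kills the entropy. Threading the explicit constants through the short-scale verification, the spectral estimate, and the comparison of $|\sL_{2N}(f)|$ with $|\sL_N(f)|^2$, so that the accumulated error is at most $\ve$, is the heart of the argument. Once this is accomplished, the sandwich $(1-\ve)h\le h_{top}(Y)\le E_W(f)\le h$ gives the theorem.
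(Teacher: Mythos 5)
Your overall architecture (an explicitly computable $h$ with $E_W(f)\le h$, plus a construction inside $W(f)$ of entropy at least $(1-\ve)h$) matches the paper's, and you correctly locate both the main difficulty and the $\frac12$-barrier of Theorem \ref{1/2theorem}. But the route you propose has genuine gaps. (a) Your $h=\frac1N\log|\sL_N(f)|$ is not computable from finitely many values of $f$: membership of a length-$N$ word in $\sL_N(f)$ depends on its extendability to an \emph{infinite} word whose complexity is bounded by $f$ at every scale, and your assertion that the extendability classes stabilise after $k$ bounded in terms of $N$ is unproved --- it is essentially the whole issue. The paper avoids it by searching over all finite sets $Y\subset A^N$ subject only to the finitely checkable constraints $q_n(Y)\le f(n)$ for $n\le N$, setting $h=\frac{\log q_{n_r}}{n_r}$ for the maximizing $Y$, and accepting that this $Y$ need not come from $W(f)$ at all, which is precisely why the lower bound then requires real work. (b) The inequality $|\sL_{2N}(f)|\ge|\sL_N(f)|^{2(1-O(\ve))}$ does not follow from the hypothesis on $\frac{\log f(n)}{n}$ near $n_0$ together with submultiplicativity: submultiplicativity gives the opposite direction, and the hypothesis only bounds $f$ from above. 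At a transition scale where $\frac1n\log|\sL_n(f)|$ drops from near $E_0$ to near $E_W$ (which can be close to $\frac12E_0$), one has $|\sL_{2N}(f)|\approx|\sL_N(f)|^{1+o(1)}$; for the same reason your $h$ itself may fail $(1-\ve)h\le E_W$ if the pre-chosen $N$ is such a scale. A scale-selection step is therefore unavoidable; the paper does it in Lemma \ref{nr}, pigeonholing among the hugely spaced scales $n_0,\dots,n_K$ and invoking Theorem \ref{1/2theorem} to find $r$ with $\frac{\log q_{n_r}}{n_r}<(1+\delta)\frac{\log q_{n_{r+1}}}{n_{r+1}}$. (c) The ``trivial spectral bound'' $\rho(M_T)\ge|T|/|\Sigma|$ is false for nonnegative matrices (a strictly upper-triangular pattern has many transitions and spectral radius $0$); you would at least need to extract a subgraph of large minimum out-degree.

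More fundamentally, the step you defer as ``delicate'' --- verifying $p_Y(k)\le f(k)$ for \emph{every} $k$ --- is the actual content of Proposition \ref{dur}, and the constant $\frac{1}{210(4+2E_0)}$ does not by itself create the needed slack: when the words of $Y$ nearly saturate $f(n)$ at some length (which maximality makes typical), any concatenation scheme produces new boundary-crossing factors at lengths just above $n$, and these overflow $f$ unless room has been made for them beforehand. The paper's proof is almost entirely devoted to manufacturing that room: it chooses $\hat n$ by Lemma \ref{hatn} so that $f(\hat n+j)\ge e^{E_0j/2}f(\hat n)$; it finds twin occurrences inside long words of $Y$ and keeps only words $\gamma_0\theta$ with $\gamma_0\theta\gamma_0$ admissible, all sharing a fixed prefix $\gamma_1$ and suffix $\gamma_2$, so that short and medium boundary-crossing factors are either already counted or few (Claims \ref{c1}, \ref{c2}, \ref{c3}); it deliberately deletes words (Lemma \ref{cour}, then the passage to $Z_4$ and the truncation to $Z_5$) so as to create missing (``ghost'') factors at every length, which is exactly what absorbs the extra factors in Claims \ref{c2}--\ref{c4}; only then does the Champernowne-type word $w(Z_5)$ give entropy at least $(1-\ve)h$ while keeping $p_{w(Z_5)}\le f$ everywhere. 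Your sofic construction contains no analogue of the twin-occurrence and fixed prefix/suffix devices, nor of the deliberate deletion of factors, so the containment of your edge shift in $W(f)$ --- and with it the lower bound $(1-\ve)h\le E_W(f)$ --- is not established.
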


We shall now give the algorithm and prove Theorem \ref {durdur}.
The funcyion $f$ is given and henceforth we omit to mention it in $E_0(f)$ and $E_W(f)$.\\

{\bf Description of the algorithm}

For  $\ve \in ]0,1[$ given, let 
\begin{equation} \label {delta}
\delta:=\frac{E_0 \ve}{105(4+2E_0)}<\frac{\ve}{210}
\end {equation}
 and 
 \begin{equation} \label {K}
 K:=\lceil \delta^{-1} \rceil +1.
 \end {equation}
We choose a  positive integer 
\begin{equation} \label {n_0}
n_0\geq K\vee \frac{4K^2}{420^3E_0}
\end{equation}
 such
that for any integer $n \ge n_0$
\begin{equation} \label {majoration log}
\frac{\log f(n)}{n} < (1+\frac{\delta}{2})E_0.
\end {equation}
In view of conditions $(\cal C^*)$, 
this last condition is
equivalent to $\frac{\log f(n)}{n} < (1+\frac{\delta}{2})E_0$ for any $n \in \llbracket n_0, 2n_0-1 \rrbracket$.
We choose intervals which will be so large that all the lengths of words we manipulate 
stay in one of them. Namely, for each non-negative integer $t $,
let $$n_{t+1}:= \exp(K((1+\delta)^2E_0n_t+E_0)).$$  We 
take $$N:=n_K.$$
We choose now a set $Y \subset A^N$ and
we define $$q_n(Y):=|L_n(Y)|$$ for  $n \in \llbracket 1, N\rrbracket$. We look at those $Y$
for which
\begin{equation} \label {majoration q_n}
 q_n(Y) \le f(n)
\end {equation}
for any $n \in \llbracket 1, N \rrbracket$
and choose one among them such that
$$\min_{1 \le n \le N}\frac{\log q_n(Y)}n$$
is maximum.
Henceforth we omit to mention $Y$ in the notation $q_n(Y)$.

\begin{proposition}\label{qn}
We have $$\min_{1 \le n \le N}\frac{\log q_n}n \ge E_W.$$
 \end{proposition}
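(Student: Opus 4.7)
The plan is to exhibit, for every $w \in W(f)$, a concrete admissible subset $Y_w \subset A^N$ whose associated minimum quantity $\min_{1\le n\le N}\frac{\log q_n(Y_w)}{n}$ already dominates $E(w)$. Since the algorithm selects a $Y$ that maximizes this minimum over all admissible sets, the inequality will persist for $Y$, and taking the supremum over $w \in W(f)$ will give the bound $\ge E_W$.

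The natural candidate is $Y_w := L_N(w)$, the set of length-$N$ factors of $w$. I would first verify that $q_n(Y_w) = p_w(n)$ for every $1 \le n \le N$. The inclusion $L_n(Y_w) \subseteq L_n(w)$ is immediate (a factor of a factor of $w$ is a factor of $w$). For the reverse, any $u \in L_n(w)$ occurs at some position $i$ in the infinite word $w$; since $w$ is one-sided infinite and $n \le N$, the length-$N$ block of $w$ starting at $i$ belongs to $Y_w$ and contains $u$, so $u \in L_n(Y_w)$. Hence $q_n(Y_w) = p_w(n) \le f(n)$, which shows that $Y_w$ satisfies the constraint \eqref{majoration q_n}, and therefore
\[
\min_{1\le n\le N}\frac{\log q_n(Y)}{n} \;\ge\; \min_{1\le n\le N}\frac{\log q_n(Y_w)}{n} \;=\; \min_{1\le n\le N}\frac{\log p_w(n)}{n}
\]
by the maximizing choice of $Y$.

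Next I would invoke Lemma \ref{lemFekete} applied to the sub-additive sequence $(\log p_w(n))_{n\ge 1}$: by the discussion just after Definition \ref{def1.2}, the sequence $\frac{1}{n}\log p_w(n)$ converges to its infimum, which equals $E(w)$. In particular $\frac{1}{n}\log p_w(n) \ge E(w)$ for every $n \ge 1$, so the right-hand side of the display is $\ge E(w)$. Combining gives $\min_{1\le n\le N}\frac{\log q_n}{n} \ge E(w)$ for every $w \in W(f)$, and taking the supremum over such $w$ yields the desired inequality $\min_{1\le n\le N}\frac{\log q_n}{n} \ge E_W$.

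There is no real obstacle here; the argument is essentially a one-line consequence of the ``infimum equals limit'' characterization of $E(w)$ once one observes that truncating an infinite word $w \in W(f)$ to its length-$N$ factors preserves all shorter factor counts exactly. The only point worth being careful about is the equality $q_n(Y_w) = p_w(n)$, which uses that $w$ is an (infinite) element of $A^{\mathbb N}$ rather than a finite word, so that every short factor admits a length-$N$ extension.
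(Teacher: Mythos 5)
Your proof is correct, but it follows a genuinely different route from the paper's. The paper does not argue pointwise over all $w\in W(f)$: it invokes a non-trivial external result (Section 4.3 of [MM17]) asserting the existence of a single word $\hat w\in W(f)$ with $p_n(\hat w)\ge \exp(E_W n)$ for \emph{every} $n$, i.e.\ a word realizing the word entropy uniformly in $n$; it then takes $X=L_N(\hat w)$ as the admissible competitor and concludes by maximality of $Y$. You instead take, for each $w\in W(f)$ separately, the competitor $Y_w=L_N(w)$, use the Fekete/subadditivity fact that $E(w)=\inf_{n\ge1}\frac1n\log p_w(n)$, so $\frac1n\log p_w(n)\ge E(w)$ for all $n$, and only at the end pass to the supremum over $w$ -- which is legitimate because the left-hand side $\min_{1\le n\le N}\frac{\log q_n(Y)}{n}$ does not depend on $w$. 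Your verification that $q_n(Y_w)=p_w(n)$ for $n\le N$ (using that $w$ is one-sided infinite, so every short factor extends to a length-$N$ factor) is exactly the point that also underlies the paper's identity $L_n(X)=L_n(\hat w)$. What each approach buys: the paper's proof is shorter once one accepts the cited existence of an entropy-attaining word, and that stronger statement is in the spirit of the rest of [MM17]; your proof is self-contained within this paper, needing only Lemma \ref{lemFekete} and the maximality of $Y$, and so avoids importing the attainment result altogether. Both are valid proofs of Proposition \ref{qn}.
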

 
\begin{proof}   It follows from Section 4.3 of [MM17] (see (4))
that there is $\hat w \in W(f)$ with $p_n(\hat w) \ge \exp(E_W n)$ for any positive integer $n$.
For such a word $\hat w$, let 
 $$X:=L_N(\hat w)\subset A^N.$$ We have, for each for  $n \in \llbracket 1, N \rrbracket$, $L_n(X)=L_n(\hat w)$ and
$f(n)\ge  |L_n(\hat w)|=p_n(\hat w)\ge \exp(E_W n)$. Thus $X$ is one of the possible $Y$and the result
follows from the maximality of $\min_{1 \le n \le N}\frac{\log q_n}n$.
\end{proof}

The next lemma shows that on one of the large intervals we have defined, the quantity $\frac{\log q_n}n$ will be almost constant:
\begin{lemma}\label{nr}
There exists a non-negative integer
 $r<K$, such that
$$\frac{\log q_{n_r}}{n_{r}} < (1+\delta) \frac{\log q_{n_{r+1}}}{q_{n_{r+1}}}.$$\end{lemma}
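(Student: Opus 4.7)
The plan is to argue by contradiction: suppose that for every $r \in \llbracket 0, K-1 \rrbracket$ one has
$$\frac{\log q_{n_r}}{n_r} \ge (1+\delta)\, \frac{\log q_{n_{r+1}}}{n_{r+1}}.$$
Iterating this inequality from $r=0$ up to $r = K-1$ and recalling that $n_K = N$ produces
$$\frac{\log q_{n_0}}{n_0} \;\ge\; (1+\delta)^K \cdot \frac{\log q_N}{N}.$$

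I would then squeeze this chain between an upper bound on the left-hand side and a lower bound on the right-hand side that together leave no room for the factor $(1+\delta)^K$. For the lower bound, Proposition \ref{qn} gives $\frac{\log q_N}{N} \ge E_W$, and Theorem \ref{1/2theorem} gives $E_W > \frac{1}{2} E_0$. For the upper bound, the constraint (\ref{majoration q_n}) gives $q_{n_0} \le f(n_0)$, and applying (\ref{majoration log}) at $n = n_0$ yields $\frac{\log q_{n_0}}{n_0} \le \frac{\log f(n_0)}{n_0} < (1+\frac{\delta}{2})E_0$. Substituting both estimates into the iterated inequality gives
$$\Big(1+\tfrac{\delta}{2}\Big)E_0 \;>\; (1+\delta)^K \cdot \tfrac{E_0}{2}, \qquad \text{i.e.} \qquad (1+\delta)^K \;<\; 2+\delta.$$

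The final step is the whole reason for the definition of $K$. By Bernoulli's inequality, $(1+\delta)^K \ge 1 + K\delta$, and since $K = \lceil \delta^{-1}\rceil + 1 \ge \delta^{-1} + 1$ one has $K\delta \ge 1 + \delta$, so $(1+\delta)^K \ge 2 + \delta$, contradicting the strict inequality just derived. There is no genuine obstacle beyond bookkeeping of the constants: the definitions of $\delta$, $K$ and $n_0$ in (\ref{delta})--(\ref{majoration log}) have been calibrated exactly so that a single Bernoulli step closes the loop, and all quantities $\log q_{n_r}$ are non-negative (since $q_n \ge 1$), so multiplying through the chain is harmless. The only place where a substantive external input enters is the invocation of Theorem \ref{1/2theorem}, which is what converts a lower bound on $E_W$ via $E_0$ into a bound strong enough to beat the factor $2$ appearing after the Bernoulli step.
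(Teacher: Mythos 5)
Your proof is correct and follows essentially the same route as the paper: assume the inequality fails for all $r<K$, telescope to get $\frac{\log q_{n_0}}{n_0}\ge(1+\delta)^K\frac{\log q_{N}}{N}$, bound the left side by $(1+\frac{\delta}{2})E_0$ via (\ref{majoration q_n})--(\ref{majoration log}), bound the right side below via Proposition \ref{qn}, and contradict Theorem \ref{1/2theorem}. The only difference is cosmetic: you close the loop with Bernoulli's inequality $(1+\delta)^K\ge 2+\delta$, whereas the paper uses the cruder numerical bounds $(1+\delta)^K>\frac94$ and $1+\frac{\delta}{2}\le\frac98$ to reach $E_W\le\frac12 E_0$; both are fine.
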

\begin{proof}
Otherwise we would have
\begin{equation} \label {iteration}
\frac{\log q_{n_0}}{n_0}\geq (1+\delta)^K \frac{\log q_{n_K}}{q_{n_K}}.
\end {equation}
As $K>\frac{1}{\delta}$, we have $(1+\delta)^K = e^{K \log(1 + \delta)} > e^{\frac 1 {\delta} \log(1 + \delta)} > \frac 94$ for $\delta < \frac 12$.
By Proposition \ref{qn}, we have $\frac{\log q_{n_K}}{n_K} \geq E_W$, so that (\ref {iteration}) would implies that
$\frac{\log q_{n_0}}{n_0}\geq \frac{9}{4}E_W$.
But 
it follows from (\ref {majoration q_n}) that $q_{n_0}\leq f(n_0)$ 
and from (\ref {majoration log}) that
$\frac{\log q_{n_0}}{n_0} < (1+\frac{\delta}{2})E_0 \le \frac 98 E_0$ for $\delta < \frac 14$.
Finally we would have $E_W \le \frac 12 E_0$
which would contradict Theorem \ref {1/2theorem}.\end{proof}

If we put $$h:=\frac{\log q_{n_{r}}}{n_{r}},$$
the next proposition follows immediately from
Proposition \ref{qn} :

\begin{proposition}\label{majoration}
We have $$h \ge E_W.$$
 \end{proposition}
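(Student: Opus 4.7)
The plan is to observe that this proposition is essentially immediate bookkeeping, since Proposition \ref{qn} has already done the substantive work. First I would verify that $n_r$ actually lies in $\llbracket 1, N \rrbracket$, so that $\frac{\log q_{n_r}}{n_r}$ is one of the quantities over which the minimum in Proposition \ref{qn} is taken. This is a direct consequence of $r < K$ (from Lemma \ref{nr}) together with the fact that $N = n_K$ and the sequence $(n_t)_{t \ge 0}$ is strictly increasing: indeed, the defining recursion $n_{t+1} = \exp(K((1+\delta)^2 E_0 n_t + E_0))$ starts from $n_0 \ge K \ge 1$, and since $K(1+\delta)^2 E_0 n_t + K E_0 > \log n_t$ for the relevant range, we get $n_0 < n_1 < \dots < n_K = N$, whence $1 \le n_r \le N$.

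Once this is granted, I simply chain inequalities: by definition of $h$ and by Proposition \ref{qn},
$$h \;=\; \frac{\log q_{n_r}}{n_r} \;\ge\; \min_{1 \le n \le N} \frac{\log q_n}{n} \;\ge\; E_W.$$
I do not anticipate any real obstacle here. All the genuine content has been placed earlier: Proposition \ref{qn} relied on the existence of a word $\hat w \in W(f)$ with $p_{\hat w}(n) \ge \exp(E_W n)$ (from \cite{MM17}) and on the maximality in the definition of $Y$; Lemma \ref{nr} in turn rested on Theorem \ref{1/2theorem} to rule out $E_W \le \tfrac{1}{2} E_0$. Proposition \ref{majoration} is then only the observation that evaluating the minimum in Proposition \ref{qn} at the particular index $n = n_r$ still yields an upper bound for $E_W$.
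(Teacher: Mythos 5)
Your proof is correct and is exactly the paper's argument: the paper simply states that Proposition \ref{majoration} follows immediately from Proposition \ref{qn}, i.e.\ $h=\frac{\log q_{n_r}}{n_r}\ge\min_{1\le n\le N}\frac{\log q_n}{n}\ge E_W$. Your additional check that $n_r\in\llbracket 1,N\rrbracket$ is a harmless (and valid) elaboration of a point the paper leaves implicit.
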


We shall use the estimates given by the following lemma:
\begin{lemma}\label{he}
We have $$ \frac{E_0}{2}\leq h\leq E_0(1+\frac{\delta}{2})$$\end{lemma}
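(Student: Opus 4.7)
The plan is to establish the two bounds separately; both follow almost immediately from what has already been set up, but they rely on different ingredients.

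For the upper bound, the key observation is that the index $r$ produced by Lemma \ref{nr} satisfies $0 \le r < K$, so $n_r$ lies in the increasing sequence $(n_t)_{t\ge 0}$ starting at $n_0$, which gives $n_0 \le n_r \le n_K = N$. The selection condition (\ref{majoration q_n}) on $Y$ then yields $q_{n_r} \le f(n_r)$, and since $n_r \ge n_0$, the hypothesis (\ref{majoration log}) applies to $n_r$. Chaining these:
$$h \;=\; \frac{\log q_{n_r}}{n_r} \;\le\; \frac{\log f(n_r)}{n_r} \;<\; \Bigl(1+\frac{\delta}{2}\Bigr) E_0,$$
which is exactly the required upper bound.

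For the lower bound, Proposition \ref{majoration} already gives $h \ge E_W$. The rest is an appeal to Theorem \ref{1/2theorem}, which asserts $E_W > \tfrac{1}{2}E_0$ whenever $f$ satisfies the conditions $(\mathcal C^*)$ (an assumption in force throughout Section 4). Combining, $h \ge E_W > \tfrac{1}{2}E_0$, so certainly $h \ge \tfrac{E_0}{2}$.

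No step is really an obstacle here: the upper bound is a bookkeeping check that the chosen index $r$ lies in the range where the approximation (\ref{majoration log}) is valid, while the lower bound is where the nontrivial mathematics enters, namely the theorem of Mauduit--Moreira that strictly separates word entropy from half of the exponential growth rate. It is worth noting explicitly that without Theorem \ref{1/2theorem} the lower bound would fail; this is why the algorithm had to be built on top of that result.
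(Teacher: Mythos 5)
Your proof is correct and follows essentially the same route as the paper: the lower bound comes from $h \ge E_W$ (Proposition \ref{majoration}, itself a consequence of Proposition \ref{qn}) combined with Theorem \ref{1/2theorem}, and the upper bound from $q_{n_r}\le f(n_r)$ together with (\ref{majoration log}) applied at $n_r \ge n_0$. Your explicit check that $r<K$ places $n_r$ in the valid range is a slightly more careful version of the paper's same bookkeeping step.
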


\begin{proof}
It follows from Proposition
\ref{qn} and Theorem  \ref {1/2theorem} that $h \ge E_W > \frac {E_0}{2}$.
On the other hand, it follows from (\ref {majoration q_n}) that
$q_{n_r}\leq f(n_r)$ and as $n_r>n_0$, it follows from (\ref {majoration log}) that  $h\leq E_0(1+\frac{\delta}{2})$.
\end{proof}

What remains to prove is the following proposition (which, understandably, does not use 
the maximality of $\min_{1 \le n \le N}\frac{\log q_n}n$).
\begin{proposition}\label{dur}
We have
$$(1-\ve)h\leq E_W.$$
\end{proposition}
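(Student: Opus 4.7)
The plan is to prove $E_W \geq (1-\varepsilon) h$ by exhibiting an infinite word $\omega \in W(f)$ whose orbit closure has topological entropy at least $(1-\varepsilon) h$; since $E_W \geq E(\omega)$ for any such $\omega$, this will yield the proposition. The two key inputs are Lemma \ref{nr}, which provides scales $n_r$ and $n_{r+1}$ with growth rates $h = \frac{\log q_{n_r}}{n_r}$ and $h' := \frac{\log q_{n_{r+1}}}{n_{r+1}}$ satisfying $h' > h/(1+\delta)$, and the enormous ratio $n_{r+1}/n_r$ produced by the recursion in the definition of $n_{t+1}$, which will make boundary effects in a block construction negligible. Note that this construction does not invoke the maximality of $\min_n \frac{\log q_n}{n}$, consistent with the remark preceding the statement.

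First I would extract a synchronising word by pigeonhole. Among the $q_{n_{r+1}} = e^{h' n_{r+1}}$ elements of $L_{n_{r+1}}(Y)$, some $u^* \in L_{n_r}(Y)$ appears as the length-$n_r$ prefix of at least $q_{n_{r+1}}/q_{n_r}$ of them. Let $\mathcal{B}$ be the corresponding set of length-$(n_{r+1}-n_r)$ suffixes, so $u^*\mathcal{B} \subseteq L_{n_{r+1}}(Y)$ and $|\mathcal{B}| \geq e^{h' n_{r+1} - h n_r}$. I would then form the subshift $Z$ generated by free concatenations of the macro-blocks $u^* v$, $v \in \mathcal{B}$; the common prefix $u^*$ acts as a synchronisation marker that will tame the boundary combinatorics.

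The entropy of $Z$ satisfies $h_{\mathrm{top}}(Z) \geq \frac{\log|\mathcal{B}|}{n_{r+1}} \geq \frac{h}{1+\delta} - h \frac{n_r}{n_{r+1}}$. With $\delta < \varepsilon/210$ by (\ref{delta}) and $n_r/n_{r+1}$ exponentially small by the recursion defining $n_{t+1}$, this exceeds $(1-\varepsilon) h$; so it remains only to check $Z \subseteq W(f)$, i.e. $p_Z(m) \leq f(m)$ for every $m$. I would split the analysis by scale: (i) for $m \leq n_r$, every factor of $Z$ of length $m$ lies inside some macro-block $u^*v \in L_{n_{r+1}}(Y)$, hence in $L_m(Y)$, so $p_Z(m) \leq q_m \leq f(m)$; (ii) for $n_r < m \leq n_{r+1}$, intra-block factors are counted as in (i), while cross-boundary factors are constrained by the fixed pattern $u^*$ and the total count is absorbed by $f(m)$ via the upper bound $h \leq (1+\delta/2) E_0$ of Lemma \ref{he} combined with $\log f(m)/m \geq E_0 - o(1)$ for $m \geq n_0$ from (\ref{majoration log}); (iii) for $m > n_{r+1}$, submultiplicativity of both $p_Z$ and $f$, together with the asymptotic rate $h_{\mathrm{top}}(Z) \leq h$, propagates the inequality to all larger scales.

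The main obstacle is case (ii): controlling the number of cross-boundary factors at intermediate scales. Without the anchoring by $u^*$, a naive count on concatenations of $L_{n_{r+1}}(Y)$ gives on the order of $n_{r+1} q_{n_{r+1}}^2 \approx e^{2h' n_{r+1}}$ cross-boundary factors of length $n_{r+1}$, which dwarfs $f(n_{r+1}) \approx e^{E_0 n_{r+1}}$ since $h'$ can be nearly as large as $E_0$. The fixed prefix $u^*$ rescues the count, and the explicit constants appearing in $\delta = \frac{E_0 \varepsilon}{105(4+2E_0)}$ and the threshold $n_0$ from (\ref{n_0}) are precisely the bookkeeping needed to make this absorption work uniformly at every intermediate scale.
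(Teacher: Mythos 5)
Your construction has a genuine gap, and it is exactly at the point where the paper's proof has to work hardest. Your step (i) is false as stated: a factor of length $m\le n_r$ of the concatenated word need not lie inside a single macro-block, since it can straddle the junction between two blocks, where it reads (suffix of $v$)(prefix of $u^*$); nothing guarantees such a word is a factor of $Y$, because $u^*v\in L_{n_{r+1}}(Y)$ says nothing about what may legally follow $v$ in $Y$. A common prefix $u^*$ is not a synchronising word in the relevant sense. The paper's construction exists precisely to manufacture genuine synchronisation: by the twin-occurrence argument (pairs of occurrences of the same word of length $\hat n$, the disjoint-interval Lemma \ref{int}, and a pigeonhole over the positions $(s_i,t_i)$) one obtains blocks $\ga_0\theta$ of length $m$ such that the word $\ga_0\theta\ga_0$ is itself in $L_{m+\hat n}(Y)$; only then are short straddling factors again factors of $Y$ (Claim \ref{c1}). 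This idea is absent from your proposal and cannot be replaced by the prefix-pigeonhole you use.

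The same problem defeats your steps (ii) and (iii). In (ii), since $Y$ is chosen so that $q_n$ may be as large as $f(n)$, the intra-block factors can already nearly exhaust the budget $f(n)$, while the straddling factors number on the order of $n\exp(h(n+O(n_r)))$ by Lemma \ref{cqn}, with $h$ possibly as large as $(1+\frac{\delta}{2})E_0>E_0$ (Lemma \ref{he}) and $f(n)$ possibly as small as $\exp(E_0 n)$; the fixed prefix $u^*$ only pins the first $n_r$ letters after a cut and does nothing about the suffix side, so there is no absorption. The paper copes by deliberately creating room under $f$: deleting blocks so that at least $j$ factors of length $\hat n+j$ are missing (Lemma \ref{cour}, giving $Z_3$), removing the $f(n)/(mn^2)$ least frequent medium-length factors (giving $Z_4$), and freezing long common prefixes and suffixes $\ga_1,\ga_2$ so that straddling words are essentially determined by the cut position (Claims \ref{c2} and \ref{c3}). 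In (iii), submultiplicativity propagates upper bounds in the wrong direction: from $p_Z\le f$ up to scale $n_{r+1}$ and $f(m_1+m_2)\le f(m_1)f(m_2)$ you cannot conclude $p_Z(m)\le f(m)$ for larger $m$. Worse, keeping all of $\mathcal B$ makes the long-scale growth rate of $Z$ about $h'=\frac{\log q_{n_{r+1}}}{n_{r+1}}$, which may exceed $E_0=\lim_n \frac{\log f(n)}{n}$, so $p_Z(n)>f(n)$ for large $n$ can actually occur; the paper avoids this by trimming to $Z_5$ with $|Z_5|\approx\exp((1-4\tilde\ve)hm)$ and checking in Claim \ref{c4} that $p_n(w(Z_5))\le\exp(E_0 n)\le f(n)$. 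So the proposal, as it stands, does not prove the proposition, and the missing ingredients are the core of the paper's argument rather than routine bookkeeping.
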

\begin{proof}
Our strategy is to build a word $w$ such that, for any positive integer $n$,
$$\exp((1-\ve)hn)\leq  p_n(w)\leq f(n),$$ which gives 
the conclusion by definition of $E_W$. To build the word $w$, we shall define an integer $m$ and
build successive subsets of $L_m(Y)$.
We order any such a subset $Z$ (lexicographically for example)
and define $w(Z)$ by using a Champernowne-type construction: namely, if 
$Z=\{\be_1,\be_2,...,\be_t\}$, we build the infinite word 
$$w(Z):=\be_1\be_2\dots\be_t\be_1\be_1\be_1\be_2\be_1\be_3\dots\be_{t}\be_t\be_1
\be_1\be_1\dots\be_t\be_t\be_t\dots$$
made by concatenation of  all words in $Z$ followed by
the concatenations of all pairs of words of $Z$ followed by
the concatenations of all triples of words of $Z$, etc...
(see [Ch33] and [MS98] for statistical properties of Champernowne words).
  
The word $w(Z)$ will satisfy $\exp((1-\ve)hn)\leq  p_n(w(Z))$ 
 for any positive integer $n$ as soon as 
 $$|Z| \ge \exp((1- \ve)h m)$$ since, 
for every positive integer $k$, 
we will have at least $|Z|^k$ factors
of length $km$ in $w(Z)$. 

The successive (decreasing) subsets $Z$ of $L_m(Y)$ we build will all have cardinality at least 
$\exp((1- \ve)h m)$ and the words $w(Z)$ will satisfy $p_n(w(Z))\leq f(n)$ for  $n$ in an interval 
which will increase at each new set $Z$ we build and ultimately contains all the integers.

We begin by an estimate on $q_n$ using the value of $h$.

\begin{lemma}\label{cqn}
For any $n \in \llbracket 1, N \rrbracket$, we have $q_n\leq \exp (hn+hn_r).$
\end{lemma}
\begin{proof}
For any integer non-negative integer $n \le N$ we write $n=an_r+b$ with $a$ non-negative integer and $b\in \llbracket 0, n_r - 1 \rrbracket$.
 As we have  $L_{n}(Y)\subset L_{an_r}(Y) L_{b}(Y) \subset (L_{n_r}(Y))^a L_{b}(Y)$ and
$ q_{n_{r}}=\exp(hn_{r})$, we get $$q_n\leq q_{n_{r}}^aq_b=
\exp(ahn_r)q_b\leq \exp(hn)q_{n_r} = \exp(hn_{r})\exp (hn).$$
\end{proof}

The following lemma uses only properties of $f$, independently of the definition of $Y$.

\begin{lemma}\label{hatn} For any integer $n\geq n_0$, there exists $n'\in \llbracket n, (1+\delta)n \rrbracket$
 such that  $$f(n'+j) \ge \exp(\frac{E_0 j}{2})f(n')$$ for every positive integer $j$.
\end{lemma}
\begin{proof}
Otherwise there would exist $j_0$ such that $f(n+j_0)< \exp(\frac{E_0 j_0}{2})f(n)$, then
there would exist $j_1$ such that 
$f(n+j_0+j_1)< 
\exp(\frac{E_0 j_1}{2})f(n+j_0)<\exp(E_0 \frac{j_0+j_1}{2})f(n)$ and so on
until we are out of the interval. Thus we would get
 some integer $s>\delta n$ such that 
$f(n+s)<\exp(\frac{E_0s}{2})f(n)$, but then, by the choice of $n_0$, we would have
$f(n+s)
<\exp(E_0 \frac{s}{2}) \exp((1+\frac{\delta}{2})E_0n)$. 
This last quantity is smaller than $\exp(E_0 
(n+s))$, because  $s>\delta n$ implies that $n\frac{\delta}{2}+\frac{s}{2}<s$,
This  would contradict the definition of $E_0$. \end{proof}

We are now ready to begin our construction. Our first aim is to define two lengths of words, 
$\hat n$ and $m$, which will be in the interval $[n_r,n_{r+1}]$  but with $m$ much larger
than 
$\hat n$ and a set  $Z_1$ of words of length $m$ of the form $\gamma\theta$, for
words $\ga$ of length $\hat n$, such that the word $\gamma\theta\gamma$ is in $L_{m+\hat n}(Y)$.
Thus, for a while, we shall be interested in twin occurrences of words.

Let  $\hat n$  be the $n'$ of Lemma \ref{hatn} 
defined for $n=Kn_r$ (note that the fact that $\hat n$ satisfies
the conclusion of Lemma \ref{hatn} will not be used before Lemma \ref{cour} much later).
Let
$$\hat N:= \lceil \exp(\frac{E_0}{2})f(\hat n)\rceil$$  
and
$$Y_1:=L_{\hat N}(Y).$$ 

We know that $Kn_r\leq \hat n\leq (1+\delta)Kn_r$. The first inequality implies that $n_r < \delta \hat n$
and the second inequality implies (by the initial choice of the $n_r$) that
$\hat N<n_{r+1}$.
We write $n_{r+1}=a\hat N+b$ with $a$ positive integer and  $b \in \llbracket 0, \hat N-1 \rrbracket$ and we use
the defining property of $r$ in Lemma \ref{nr}, which
translates into 
$$q_{n_{r+1}}\geq \exp(\frac{h n_{r+1}}{1+\delta})=\exp(\frac{h(a\hat N+b)}{1+\delta}).$$
On the other hand, we have by Lemma \ref{cqn},
 $$q_{n_{r+1}}\le q_{\hat N}^a q_b \le q_{\hat N}^a \exp(hb+hn_r).$$
Hence we get
$$q_{\hat N}^a \ge 
\exp(\frac{ha\hat N+hb}{1+\delta}-h(b+n_{r}))$$ and, as $a\geq 1$, this implies $$q_{\hat N}
\ge \exp(h(\frac{\hat N}{1+\delta}-\frac{\delta b}{1+\delta}-n_{r})).$$
As $b<\hat N$ 
and $n_r<\delta \hat N$, we get
$$|Y_1|=q_{\hat N} > \exp((1-3\delta)h\hat N).$$

For the moment, we fix a  word $W$ in $Y_1$.
The word $W$ has $\hat N -\hat n+1$ factors of length $\hat n$ and we have 
$\hat N-\hat n+1>(1+\frac{E_0}{2})f(\hat n)>f(\hat n)$. There are at most $f(\hat n)$
distinct factors of length $\hat n$. We make the list of the $c\leq f(\hat n)$ different words occurring 
in $W$, the $j$-th one
appearing
$a_j$ times, with $\sum_{j=1}^c  (a_j) >\hat N -\hat n+1$. We look at pairs of occurrences of 
the same factor, beginning at two different positions $s<t$. We denote such a pair by $(s,t)$
 and say two such pairs $(s,t)$ and $(s',t')$ are distinct if $t\neq t'$. Thus there are at least
 $\sum_{j=1}^c (a_j-1)\geq \hat N-\hat n+1-f(\hat n)$ distinct pairs.
To each pair $(s,t)$ we associate the interval $[s,t+\hat n[$.
The union of these intervals contains at least
$\hat N-\hat n+1-f(\hat n)+\hat n-1=\hat N-f(\hat n)$ integer points. 

Now we use the following elementary 

\begin{lemma}\label{int}
Given a finite family of intervals $(I_j)_{1\leq j\leq d}$, there is
a subfamily of disjoint intervals $(I_j)_{j\in J}$ such that
$$\sum_{j\in J} \ab I_j \ab \geq \sum_{i=1}^d \ab I_j\ab.$$ \end{lemma}
\begin{proof} We number the $I_j$ by ascending order of their lowest elements. Let $\hat d$ be
the largest $j$ such that $I_j\supset I_d$. We can remove all the $I_j$ for $\hat d <j\leq d$, if
they exist.
Then if $I_j\cap I_{j+2}
\neq \emptyset$, $I_{j+1}$ must be included in $I_j \cup I_{j+2}\cup \ldots I_{\hat d}$
and we can remove $I_{j+1}$. Thus, after removing some intervals and renumbering, we can suppose all the
$I_j\cap I_{j+2}$ are empty. Then either the family of even-numbered intervals or the family
of odd-numbered intervals satisfies our requirements. \end{proof}

We apply Lemma \ref{int} to the above intervals $[s,t+\hat n[$, for the word $W$.
Thus we get some $\ell$ and 
$s_1<t_1<\ldots <s_\ell<t_\ell$, such that the same factor of $W$ occurs at positions $s_i$ and $t_i$ and
the sum of the lengths $\sum_{i=1}^{\ell} (t_i+\hat n-s_i)$ is at least
 $$\frac{\hat N-f(\hat n)}{2} \ge \frac{E_0}{4+2E_0}\hat N,$$
 because 
$\hat N\geq e^{\frac{E_0}{2}}f(\hat n)$ and 
$\frac{1-e^{-\frac{E_0}{2}}}{2}\geq \frac{\frac{E_0}{2}}{2(1+\frac{E_0}{2})}$.

Since $t_i+\hat n-s_i \ge \hat n$ for each $i \le \ell$, we have $\ell \le \frac{\hat N}{\hat n}$. 

Now, if we look at  all $W$ in $Y_1$, the number of possible choices for the pairs $(s_i,t_i), 
1 \le i \le \ell$ is 
at most $$\sum_{\ell =1}^{\frac{\hat N}{\hat n}}{\hat N \choose {2 \ell}} \le
\exp(\frac {4\hat N\log \hat n}{\hat n})$$ and this is smaller than $\exp(\delta \hat N)$
because $\hat n\ge n_0 K \ge K^2>\frac{1}{\delta^2}$, thus 
$$\frac{4\log\hat n}{\hat n}\leq 
8\delta^2\log\frac{1}{\delta}<\delta \frac{E_0}{2}\leq \delta h.$$

The cardinality of $Y_1$ is at least
$\exp((1-3\delta)h\hat N)$, thus we can find a subset  $Y_2\subset Y_1$ of at least 
$\exp((1-4\delta)h\hat N)$ 
elements of $Y_1$ which have the same choice of pairs $(s_i,t_i)$.

We define,  for  $(s, t) \in \llbracket 1, \hat N \rrbracket^2$ with $s<t$, the projections
$\pi_{s,t}\colon Y_2\to A^{t-s}$ 
 by $\pi_{s,t}(\be_1,\be_2,\dots,\be_{\hat N})=(\be_s,\be_{s+1},\dots,\be_{t-1})$.

Let 
\begin{equation} \label {epsilon}
\tilde \ve= \frac{\ve}{15}=\frac{7(4+2E_0)\delta}{E_0}>14 \delta.
\end{equation}

\begin{lemma}\label{pair}
There is a pair $(s_i,t_i)$  such
that $$|\pi_{s_i,t_i+\hat n}(Y_2)| \ge \exp((1-\tilde \ve)h \cdot (t_i+\hat n-s_i)).$$
\end{lemma}
\begin{proof}
Suppose 
by contradiction that for each $i \in \llbracket 1, \ell \rrbracket$ we
have $$|\pi_{s_i,t_i+\hat n}(Y_2)| < \exp((1-\tilde \ve)h \cdot (t_i+\hat n-s_i)).$$ 
The interval
$[1, \hat N[$ can be written as the union of the intervals $[s_i,t_i+\hat n[, i \in \llbracket 1, \ell \rrbracket$ with
at most $\ell+1$ holes. Let $M$ be the sum of the lengths of the holes, we have proved $M$ is at most
$(1-\frac{E_0}{4+2E_0})\hat N$.

By Lemma \ref{cqn}, an
upper bound for the number of possible sequences 
in these holes is
$$\exp((\ell+1)h n_{r}+hM) \le \exp(hM+2\delta h \hat N).$$ This would give
an upper estimate for the total number of words in $Y_2$ of the
order of $$\exp(h \hat N) \exp(-\tilde \ve h \frac{E_0}{4+2E_0} \hat N) \exp(2 \delta \hat N)\le$$
$$\exp(h \hat N) \exp(-7\delta h \hat N) \exp(2\delta h \hat N)=
\exp((1-5\delta)h \hat N),$$ which would contradict the lower estimate $\exp((1-4\delta)h \hat N)$.
\end{proof}

Now we fix a pair $(s_i,t_i)$ such that
$$|\pi_{s_i,t_i+\hat n}(Y_2)| \ge \exp((1-\tilde \ve)h(t_i+\hat n-s_i)).$$

For a 
word in $Y_2$, the sequence of its letters whose positions are
in the interval $[s_i,t_i+\hat n[$ is such that its last $\hat n$ 
letters coincide with its first $\hat n$ letters. Bounding $q_{t-i+\hat n -s_i}$ by Lemma \ref{cqn} and using $n_r<\delta\hat n$,  we get
$$|\pi_{s_i,t_i+\hat n}(Y_2)| \le
\exp(h(t_i+\hat n-s_i+\delta\hat n-\hat n)),$$
which  because of the above choice of the pair  implies 
$(1-\delta)\hat n \le \tilde \ve(t_i+\hat n-s_i)$ and 
$t_i+\hat n-s_i-\hat n>\frac 1 {2\tilde \ve} \hat n$. 

If we put $$m:=t_i-s_i,$$ 
we have 
\begin{equation} \label {m}
m>\frac {\hat n}{2\tilde \ve}.
\end{equation}
We shall need  the following upper bound.

\begin{lemma}\label{hmh} We have $
m<\exp(\frac{E_0}{2}\tilde\ve m).$
\end{lemma}
\begin{proof}
Let $\phi$ the function defined for any $x \in \R^+$ by $\phi (x) = \exp(\frac{E_0}{2}\tilde\ve  x)-x.$ 

The function $\phi$ is increasing on the interval $[\frac {K^2}{2\tilde \ve}, +\infty [$:
we have $\phi '(x) = \frac{E_0}{2}\tilde\ve \exp(\frac{E_0}{2}\tilde\ve x)-1$ and $\phi '(\frac{K^2}{2\tilde\ve}) > 0$
because it follows from (\ref{delta}), (\ref{K}) and (\ref{epsilon}) that $K^2 > (\frac{420}{E_0 \ve})^2 > \frac{420^2}{{E_0}^2 \ve} > \frac{8}{{E_0}^2\tilde \ve}$, so that
\footnote{For any $x \in \R^+$, we have $\exp(x)> x$.}
$$ \exp(\frac{E_0}{4}K^2) > \frac{E_0}{4}K^2 > \frac{2}{E_0\tilde\ve}.$$
It follows from (\ref {m}) and (\ref {n_0}) that $m> \frac {\hat n}{2\tilde \ve}>\frac {Kn_0}{2\tilde \ve}
> \frac {K^2}{2\tilde \ve}$, so that $$\phi (m) > \phi (\frac {K^2}{2\tilde \ve})$$
and it follows from (\ref{delta}), (\ref{K}) and (\ref{epsilon}) that ${E_0}^3K^4 > \frac{(420)^3}{\ve^3} \frac{14}{\tilde \ve} > \frac{(420)^3 14}{\tilde \ve} > \frac{(4)^3 3}{\tilde \ve}$, so that
\footnote{For any $x \in \R^+$, we have $\exp(x)> \frac{x^3}{6}$.}
$$\phi (\frac {K^2}{2\tilde \ve}) =  \exp(\frac{E_0}{4}K^2) - \frac {K^2}{2\tilde \ve} > \frac{{E_0}^3 K^6}{6 (4)^3} - \frac {K^2}{2\tilde \ve} > 0.$$

  \end{proof}
   
The set
$$Z_1:=\pi_{s_i,t_i}(Y_2)$$ 
is made with words of length $m$ of the type 
$\ga \theta$ for words $\gamma$ of length $\hat n$, such that the word
$\ga\theta \ga$ is in $\pi_{s_i,t_i+\hat n}(Y_2)$. 
Thus 
$$|Z_1|=|\pi_{s_i,t_i+\hat n}(Y_2)|\geq \exp((1-\tilde \ve)h (n+\hat n).$$ 

Then we  consider the prefixes of length $6 \tilde \ve m \ge 3\hat n$
of words of $Z_1$ and their suffixes of length $6 \tilde \ve m \ge 3\hat n$.
By  Lemma \ref{cqn}, 
and $n_r<\delta\hat n$,
there are at most $\exp(12\tilde \ve h m+2\delta h\hat n)$ such subwords and, by choosing those
which are more frequent, we define a new set $Z_2\subset Z_1$ in which all the words 
have the same prefix $\ga_1$ of length $6\tilde\ve hm$ and all the words 
have the same suffix $\ga_2$ of length $6\tilde\ve hm$, with
$|Z_2|\ge|Z_1|\exp(-12\tilde \ve h m-2\delta h\hat n)$ and $2\delta h\hat n \leq (1-\tilde\ve )\hat n$,
thus $$|Z_2|\geq \exp((1-13\tilde \ve)h m).$$ 

 As a consequence of the definition of $Z_2$, all words of $Z_2$ have the same 
prefix of length $\hat n$, which is  a prefix $\ga_0$ of $\ga_1$. 
As $Z_2$ is included in $Z_1$, any word of $Z_2$ 
is of the form $\gamma_0\theta$ and the word $\gamma_0\theta\gamma_0$ is in $L_{m+\hat n}(Y)$.

We can now reap a (small)
first benefit of all this construction: by using the above property of $\ga_0$, we can bound by below $f(n)$
the number of very short factors of $w(Z_2)$.

\begin{claim}\label{c1} We have $p_{w(Z_2)}(n)\leq f(n)$ for any $n \in \llbracket 1, \hat n+1 \rrbracket$.\end{claim}
\begin{proof}
For $1\leq n\leq \hat n+1$, a factor $x$ of length $n$ of $w(Z_2)$  either is
a factor of a word of $Z_2$ and this word is some $\ga_0\theta$,
or else is made with
a suffix of length $u \in \llbracket 1, n-1 \rrbracket$ of a 
word $\ga_0\theta$ of $Z_2$ concatenated with a prefix of length $n-u \in \llbracket 1, n-1 \rrbracket$ of
another
word $\ga_0\theta'$ of $Z_2$, thus 
$x$ is a factor of $\gamma_0\theta\gamma_0$. In both cases $x$
is a factor of a word in  $L_{m+\hat n}(Y)$, thus is in $L_n(Y)$. Thus our claim
is satisfied
as $|L_n(Y)|\leq q_n\leq f(n)$.
\end{proof}

Let us shrink again our set of words.
\begin{lemma}\label{cour}
For a given subset $Z$ of $Z_2$, there exists
$Z'\subset Z$, $$|Z'|\geq (1-\exp(-(j-1)\frac{E_0}{2}))^j|Z|,$$ such that
the total number of factors of length $\hat n+j$ of all words $\ga_0\theta\ga_0$ such that 
$\ga_0\theta$ is in $Z'$ is at most $f(\hat n+j)-j$.
\end{lemma}
\begin{proof}
 Let $w_1$, \ldots, $w_c$, with $c\leq f(\hat n + j)$, the factors of length
$\hat n+j$ of all words  $\ga_0\theta\ga_0$ such that 
$\ga_0\theta$ is in $Z$. If $c<f(\hat n + j)$, we add arbitrary 
words (we call them ghost factors) $w_d$, $c<d\leq f(\hat n + j)$ of length $\hat n + j$, to
make $f(\hat n+j)$ different words.
For such a word $\ga_0\theta\ga_0$, its number of factors of length $\hat n+j$ is at most
$m+\hat n -(\hat n+j)+1$.

The proportion of subsets $\{w_{i_1},\ldots w_{i_j}\}$ of $j$  words (among the possible
$f(\hat n+j)$ factors of length $\hat n+j$,  including ghost factors)
 such that no $w_{i_r}$ is a factor of $\ga_0\theta\ga_0$ is at least
$$\frac{\binom{f(\hat n+j)-(m -j+ 1)}{j}}{\binom{f(\hat n+j)}{j}}=
\frac{f(\hat n+j)-(m -j+1)}{f(\hat n+j)}\ldots \frac{f(\hat n+j)-m}{f(\hat n+j)-j+1}
$$ $$>(\frac{f(\hat n+j)-m}{f(\hat n+j)})^j>(1-\exp((j-1)\frac{E_0}{2}))^j$$
as  $f(\hat n + j) \ge \exp(j E_0 /2)f(\hat n)$, by choice of 
$\hat n$ after Lemma \ref{hatn} and $m \le \hat N - \hat n \le \exp(\frac{E_0}{2})f(\hat n)$.

Thus on average a subset
of $j$ factors $w_t$ intersects a proportion at most $1-(1-e^{(j-1)\frac{E_0}{2}})^j$ of the words
$\ga_0\theta\ga_0$ for $\ga_0\theta$ in 
$Z$. There are as many words $\ga_0\theta$ in $Z$ as 
corresponding words $\ga_0\theta\ga_0$.
Thus there exists a set of $j$ factors $w_t$ and a subset $Z'$ of $Z$ of cardinality
at least $ (1-e^{-(j-1)\frac{E_0}{2}})^j|Z|$ such that none of the $j$ factors $w_t$ is a factor
of a word $\ga_0\theta\ga_0$ for $\ga_0\theta$ in $Z'$.\end{proof}

We start from $Z_2$ and apply successively Lemma \ref{cour} from $j=2$ to $j=6\tilde \ve m$,
getting $6\tilde \ve m-1$ successive sets $Z'$.
At the end, we get a set $Z_3$
such that the total number of factors of length $\hat n+j$ of words $\ga_0\theta\ga_0$ for $\ga_0\theta$
in $Z_3$ 
is at most $f(\hat n+j)-j$ for $j=2,\ldots, 6\tilde\ve m$ and $\frac{|Z_3|}{|Z_2|}$ is
at least
 $$\prod_{2 \le j \le 6 \tilde \ve m-\hat n}(1-\exp(-(j-1)\frac{E_0}{2}))^j \ge 
\prod_{j \ge 2}(1-\exp(-(j-1)\frac{E_0}{2}))^j:=p_0.$$
We have $$\log p_0=
\sum_{j \ge 2}j \log(1-\exp(-(j-1)\frac{E_0}{2}))$$
$$ > \sum_{j \ge 2}\frac{-j\exp(-(j-1)\frac{E_0}{2})}{1-\exp(-(j-1)\frac{E_0}{2})}.\footnote{For any $x \in ]0, 1$[, we have $\log(1-x)>-\frac{x}{1-x}$.}$$

It follows that
$$\log p_0 > \frac {-1} {1-\exp(-\frac{E_0}{2})} \sum_{j \ge 2} j\exp(-(j-1) \frac{E_0}{2}) = \frac{-\exp(-\frac{E_0}{2})(2-\exp(-\frac{E_0}{2}))}{(1-\exp(-\frac{E_0}{2}))^3} \ge \frac{-1}{(1-\exp(-\frac{E_0}{2}))^3},$$
which implies \footnote{For any $x \in ]0, +\infty [$, we have $\frac{1}{1-\exp(-x)}<1+\frac{1}{x}$.}
$$p_0 \geq  \exp(-(1+\frac{2}{E_0})^3).$$
Now $(1+\frac{2}{E_0})^3$ is smaller than $\tilde\ve hm$ because $$h\geq \frac{E_0}{2}, \quad
\tilde\ve m\geq \frac{\hat n}{2}\geq \frac{Kn_0}{2}$$ and from (\ref {n_0}) $$K\frac{E_0}{4}n_0\geq \frac{K^3}{420^3}
\geq (1+\frac{2}{E_0})^3,$$ thus
 $$|Z_3|
\ge \exp((1-14\tilde \ve)h m).$$

We can now bound the number of short factors by using the  factors we have just deleted
and properties of the words $\ga_0$, $\ga_1$ and $\ga_2$.

\begin{claim}\label{c2} We have $p_{w(Z_3)}(n)\leq f(n)$ for any $n \in \llbracket 1, 6\tilde\ve m \rrbracket$.\end{claim}
\begin{proof} Claim \ref{c1} is still valid for $Z_3\subset Z_2$, so we look at a factor $x$ in
$w(Z_3)$ of length $\hat n +j$ with $j \in \llbracket 2, 6\tilde\ve m -\hat n \rrbracket$. 
If $x$ is a factor of some $\ga_0\theta\ga_0$ for $\ga_0\theta$ in $Z_3$, there are at most
$f(\hat n+j)-j$ possibilities for $j$.
 We look at those $x$ which are  not  a factor of such a $\ga_0\theta\ga_0$.
Then $x$
 is made with
a suffix of length $u \in \llbracket 1, \hat n+j-1 \rrbracket \subset \llbracket 1, 6\tilde\ve m \rrbracket$ of a 
word $\ga_0\theta$ of $Z_3$ concatenated with a prefix of length $\hat n+j-u \in \llbracket 1, 6\tilde\ve m  \rrbracket$
of
a word $\ga_0\theta`$ of $Z_3$ and we nust have $\hat n+j-u> \hat n$, otherwise $x$ would be a factor
of $\ga_0\theta\ga_0$.
As $Z_3\subset Z_2$,  the suffix is in $\ga_1$ and the prefix in $\ga_2$,
so the number of these possible $x$ is at most the number
of possible $u$, which range between $1$ and $j$. Thus the total  number of different
$x$ is at most $f(\hat n+j)-j+j$.
\end{proof}

We shrink our set again.

Let $n \in \llbracket  6 \tilde \ve m, m \rrbracket$.
In average a factor of length $n$ of a word in $Z_3$
occurs in at most $\frac{m|Z_3|}{f(n)}$ elements of $Z_3$ (we assume as above that
there are $f(n)$ possible words
  of size $n$, possibly by adding ghost factors). We consider the
$\frac{f(n)}{mn^2}$ factors of length $n$ which occur the least often. In total, these 
factors occur in at most $\frac{m|Z_3|}{f(n)}\frac{f(n)}{mn^2}=\frac{|Z_3|}
{n^2}$ elements of $Z_3$. We remove these words from $Z_3$, for any
$m \ge n > 6 \tilde \ve m$,  obtaining a set $Z_4$. We have removed
a proportion at most $1/n^2$ of $Z_3$ for each $n$ 
  with $m \ge n> 6 \tilde \ve m \ge 3 \hat n$, thus a total proportion 
  at most $\frac1{3 \hat n} \leq \frac{\delta}{3}< \frac{1}{630}$ of $Z_3$.
   This is smaller than
  $1-\exp(-\tilde \ve h m)$ by Lemma \ref{hmh}, thus  $$|Z_4|
\ge \exp((1-15\tilde \ve)h m).$$ 

We can now control medium length factors, using again the missing factors we have just
created and the words
$\ga_1$ and $\ga_2$ (but not $\ga_0$).
  
  \begin{claim}\label{c3} We have $p_{w (Z_4)}(n)\leq f(n)$ for any $n \in \llbracket 1, m \rrbracket$.\end{claim}
\begin{proof} Claim \ref{c2} is still valid for $Z_4\subset Z_3$. Let $6\tilde \ve m\leq n\leq m$ and 
$x$ a factor of length $n$ of $w(Z_4)$.
If $x$ is
a factor of a word in $Z_4$, by construction of $Z_4$
the number of different possible $x$ is
at most $f(n)-\frac{f(n)}{mn^2}$. 

If $x$ is not
a factor of a word in $Z_4$, 
then it
 is made with
a suffix of length $u \in \llbracket 1, n-1 \rrbracket$ of a 
word of $Z_4$ concatenated with a prefix of length $n-u$ of
another word of $Z_4$. 
If $u\geq 6\tilde\ve m$, let $u_1=u-6\tilde\ve m$, $u_2=n-u$.
 If $u<6\tilde\ve m$, let $u_1=0$,
$u_2=n-6\tilde\ve m$. Our $x$ is made with
a variable word of length $u_1$, concatenated with a factor of $\gamma_1\gamma_2$ which depends only on
$u$, $1\leq u\leq n-1$, concatenated with a variable word of length $u_2$. The $u_i$ depend also only
on $u$ and, 
for $u_i$ fixed, the
number of possible words of length $u_i$ is at most $q_{u_i}$.
 By  Lemma \ref{cqn}, we get
$$q_{u_1}q_{u_2}\leq \exp(h(2n_r+u_1+u_2))\leq  \exp(h(2n_r+n-6 \tilde \ve m)).$$

Thus by Lemma \ref{he} the number of possible  $x$ which are not factors of words in $Z_4$ is at  most
$$n \exp(h(2n_r+n-6 \tilde \ve m )) \le
 m \exp(h(2n_r+n-6 \tilde \ve m ))$$
$$ \le m  \exp(2n_rh-6h \tilde \ve m) \exp(E_0(1+\frac{\delta}{2}) n)$$ 
$$<m 
 \exp(2n_rh+(E_0\frac{\delta}{2}-6h \tilde \ve) m) \exp(E_0 n).$$
 As $n_r<\delta m$, this number is strictly smaller than
 $$m \exp(2hm\delta+(E_0\frac{\delta}{2}-6h \tilde \ve)
 m) \exp(E_0 n)
 <m \exp(-3h \tilde \ve m) \exp(E_0 n)$$
 because
 $\delta \frac{E_0}{2}<h\tilde\ve$  by Lemma \ref{he} and $\delta<\frac {\tilde \ve}{14}$ by (\ref {epsilon}).
  By Lemma \ref{hmh},  our last estimate on the number of possible $x$ is at most
   $$\frac{\exp(E_0 n)}{m^3}\le\frac{f(n)}{{m}^3} \le \frac{f(n)}{m n^2}$$
 and our claim is proved.\end{proof}

Finally we put $Z_5=Z_4$ if $|Z_4|\leq \exp((1-4\tilde\ve)hm)$, otherwise we take for $Z_5$ any subset of 
$Z_4$ with
$\lceil \exp((1-4\tilde\ve)hm)\rceil$ elements. 
In both cases we have 
$$|Z_5|
\ge \exp((1- \ve)h m).$$ 

For the long factors, we use mainly the fact that there are many missing factors of length $m$,
but we need also some help from $\ga_1$ and $\ga_2$
  
  \begin{claim}\label{c4} We have $p_{w(Z_5)}(n)\leq f(n)$ for any $n$.\end{claim}
\begin{proof} Claim \ref{c3} is still valid for $Z_5\subset Z_4$. Let $x$ be a factor of $w(Z_5)$
 of length $n>m$, with $n=Qm+u$,  $0\le u<m$,  $Q\geq 1$ and thus $$Qm\geq n/2.$$
 The word $x$ is made with a suffix of length $u_1$ of a word of $Z_5$, concatenated with $Q'$ words of $Z_5$
 concatenated with a 
 prefix of length $u_2$ of a word of $Z_5$. According to the value of $u_1$, 
 there are two possibilities: 
 
- first case
  $Q'=Q$ and $u_1+u_2=u$ and this occurs for $m_1$ possible values of $u_1$;
  
 - second case
 $Q'=Q-1$ and
$u_1+u_2=m+u$ and this occurs for $m-m_1$ values of $u_1$.  
 
In the first case 
we bound $q_{u_1}q_{u_2}$ by 
Lemma \ref{cqn} and  the number of possible $x$ by 
$$p_1=m_1\exp(hu)\exp((1-4\tilde\ve)hmQ+2hn_r).$$
Thus $p_1=m_1\exp(hn)\exp(-4\tilde\ve hmQ+2hn_r)$, where $n_r$ is at most $\delta m$ and $Qm$ is at least $n/2$, thus
 $$p_1\leq m_1\exp(hn)\exp(2hm\delta)\exp(-2\tilde\ve hn).$$

 In the  second case, either the initial suffix of length $u_1$
or the final prefix of length $u_2$ contains one of the fixed words
$\gamma_1$ or $\gamma_2$ of length 
$6\tilde\ve m$ and, using again Lemma \ref{cqn}, we bound
the number of possible $x$ by 
$$p_2=(m-m_1)\exp(h(m+u)-6h\tilde\ve m)\exp((1-4\tilde\ve)h m(Q-1)+2hn_r).$$
We have $h(m+u)+hm(Q-1)=hn$ and use $n_r<\delta m$, thus
$p_2$ is at most
$$(m-m_1)\exp((-6\tilde\ve+2\delta) hm)\exp(hn)\exp(-4\tilde\ve h(Q-1)m)$$
$$\le (m-m_1)\exp((-2\tilde\ve+2\delta) hm)\exp(hn)\exp(-4\tilde\ve hQm)$$
$$\le (m-m_1)\exp((-2\tilde\ve+2\delta) hm)\exp(hn)\exp(-2\tilde\ve hn)$$
$$\le (m-m_1)\exp((2\delta hm)\exp(hn)\exp(-2\tilde\ve hn).$$

Finally, we have $$p_n(w(Z_5)) \le m\exp(hn)\exp(2hm\delta)\exp(-2\tilde\ve hn).$$

By Lemma \ref{hmh} 
 and Lemma \ref{he} we have $m \le \exp(\tilde\ve hm) \le \exp(\tilde\ve hn)$ (because $m \le n$). Thus 
$$p_n(w(Z_5))\leq \exp(E_0n)\exp(2hm\delta)\exp(-\tilde\ve hn)\exp(nE_0\frac{\delta}{2})$$
$$\leq \exp(E_0n)\exp(h(2m+n)\delta)\exp(-\tilde\ve hn)$$
$$\leq \exp(E_0n)\exp(3hn\delta)\exp(-\tilde\ve hn).$$ 
As we have $\tilde \ve>3\delta$ by (\ref {epsilon})
we get $p_n(w(Z_5)) \leq \exp(E_0n)\leq f(n)$.
\end{proof}

In view of the considerations at the beginning of the proof of Proposition \ref{dur}, Claim \ref{c4} 
completes the proof of Proposition \ref{dur} and thus of
Theorem \ref{durdur}.\end{proof}

\end{document}